\documentclass[11pt]{article}

\usepackage[T1]{fontenc}
\usepackage{lmodern}
\usepackage{amsmath,amssymb,amsthm,mathtools}
\usepackage[margin=1.08in]{geometry}
\usepackage{microtype}
\usepackage{booktabs}
\usepackage{xcolor}
\usepackage{hyperref}
\usepackage{fancyhdr}

\hypersetup{
  colorlinks=true,
  linkcolor=blue!55!black,
  citecolor=blue!55!black,
  urlcolor=blue!55!black,
  pdftitle={A Nonmonotone Real-Rootedness Set for Symmetric Imaginary Shifts},
  pdfauthor={Vasily Stodolsky; Independent Researcher; ORCID 0009-0003-5329-5308},
  pdfsubject={AI research artifact; model-generated mathematical content},
  pdfkeywords={real-rooted polynomials, hyperbolic polynomials, imaginary shifts, Sturm theorem, finite-difference operators}
}

\newtheorem{theorem}{Theorem}[section]

\newcommand{\OmegaF}{\Omega_F}

\title{\textbf{A Nonmonotone Real-Rootedness Set for\\
Symmetric Imaginary Shifts}\\[0.7em]
\large\textnormal{AI Research Artifact}}
\author{Vasily Stodolsky\\
\small Independent Researcher\\
\small \href{https://orcid.org/0009-0003-5329-5308}
{ORCID 0009-0003-5329-5308}}
\date{Version 0.1.3, August 13, 2026\\Authorship and provenance update}

\begin{document}

\maketitle

\begin{center}
\begin{minipage}{0.89\textwidth}
\small
\textbf{Authorship and accountability.}
The theorem statements and candidate proofs were materially generated and
revised with AI systems. Vasily Stodolsky is the named author, approved this
version, and accepts responsibility for all its contents. The end-matter AI Use
and Provenance Statement records the concrete human and model contributions.
No independent human peer review is claimed.

\smallskip
\textbf{Version note.}
Version 0.1.3 replaces mental-state and submission-decision wording with an
action-based authorship and accountability statement and aligns Zenodo and
arXiv metadata. The theorem statement, proof arguments, polynomial, and exact
certificates are unchanged from version 0.1.2.
\end{minipage}
\end{center}

\begin{abstract}
For a real polynomial $F$ and $\omega\geq 0$, let
\[
  A_\omega(z)=\frac{F(z+i\omega)+F(z-i\omega)}{2},
  \qquad
  \OmegaF=\{\omega\geq0:A_\omega\text{ has only real zeros}\}.
\]
This artifact presents an explicit rational even polynomial of degree eight for which
$6/25$ and $12/25$ belong to $\OmegaF$, while $3/10$ does not. Exact Sturm
certificates give respectively eight, four, and eight distinct real zeros.
Consequently $\OmegaF$ is neither an interval nor an up-set. All zeros of
$F$ lie in the strip $|\operatorname{Im}z|\leq11/25$, and the classical
strip-contraction theorem gives the eventual tail
$[11/25,\infty)\subset\OmegaF$. We also include a direct elementary proof
of that tail and a standard-library exact verifier.
\end{abstract}

\medskip
\noindent\textbf{Keywords.}
Real-rooted polynomials; hyperbolic polynomials; symmetric imaginary shifts;
Sturm theorem; finite-difference operators; zero-preserving operators.

\medskip
\noindent\textbf{MSC 2020.}
26C10, 30C15, 47B38.

\medskip
\noindent\textbf{Persistent record.}
The version history is maintained at
\href{https://doi.org/10.5281/zenodo.21728324}
{10.5281/\allowbreak zenodo.21728324}.

\section{Introduction}

Let $D$ denote differentiation. For a polynomial $F$, the operator
\[
  \cos(\omega D)F(z)
  =\frac{F(z+i\omega)+F(z-i\omega)}{2}
\]
is a two-term finite-difference operator with imaginary shifts. Classical
results of de Bruijn localize the zeros of such combinations and give
quantitative contraction of a horizontal zero strip
\cite{deBruijn1950}. Br\"and\'en and Chasse give a precise modern
strip-preserver formulation: if all zeros of a real polynomial lie in
$|\operatorname{Im}z|\leq\mu$, then the output strip has half-width
\[
  \sqrt{\max\{\mu^2-\omega^2,0\}};
\]
see \cite[Theorem 1.6]{BrandenChasse2017}. In particular, every
$\omega\geq\mu$ gives a real-rooted output.

The universal theory of hyperbolicity and stability preservers is much
broader; see Borcea and Br\"and\'en \cite{BorceaBranden2009}. Related
finite-difference classifications and large-step asymptotics appear in
Katkova, Tyaglov, and Vishnyakova \cite{KTV2020}. Adams and Cardon study a
symmetric imaginary-shift sum when the input already has only real zeros
\cite{AdamsCardon2007}, while Cardon studies universal complex zero strip
decrease for differential operators, including cosine symbols
\cite{Cardon2015}. Nuij's deformation $p+s p'$ concerns paths and topology
inside spaces of hyperbolic polynomials \cite{Nuij1968}. The classical
multiplier-sequence framework of P\'olya and Schur concerns diagonal
coefficient operators \cite{PolyaSchur1914}; it does not determine the
fixed-input set $\OmegaF$ considered here.

These results control whole input classes or the eventual strip tail. A
different question is what the parameter set
\[
  \OmegaF=\{\omega\geq0:\cos(\omega D)F\text{ is real-rooted}\}
\]
can look like for one fixed polynomial whose initial zeros are not all real.
The example below shows that the set need not be monotone and need not be an
interval, even when the input polynomial is real and even, its zeros lie in a bounded strip,
and an eventual real-rooted tail is present.

The claim is deliberately narrow. No assertion of historical priority or
minimal degree is made. A bounded source audit found no exact anticipation
or stronger containment of the displayed finite witness. The complete
1914 P\'olya-Schur article was inspected as background. One older monograph
remains available only through an authoritative bibliographic record and
modern formulations. The accompanying audit summary records that limit.

\section{The polynomial and the main result}

Define
\begin{align}
F(z)={}&
\left(\left(z-\frac1{50}\right)^2+\left(\frac7{25}\right)^2\right)
\left(\left(z+\frac1{50}\right)^2+\left(\frac7{25}\right)^2\right)
\notag\\
&\quad\times
\left(\left(z-\frac35\right)^2+\left(\frac{11}{25}\right)^2\right)
\left(\left(z+\frac35\right)^2+\left(\frac{11}{25}\right)^2\right).
\label{eq:F}
\end{align}
For $\omega\geq0$, set
\begin{equation}
  A_\omega(z)=\frac{F(z+i\omega)+F(z-i\omega)}{2},
  \qquad
  \OmegaF=\{\omega\geq0:A_\omega\text{ has only real zeros}\}.
  \label{eq:Aomega}
\end{equation}

\begin{theorem}\label{thm:main}
The polynomial $F$ in \eqref{eq:F} is real and even, and every zero of $F$
lies in $|\operatorname{Im}z|\leq11/25$. Moreover,
\[
  \frac6{25}\in\OmegaF,
  \qquad
  \frac3{10}\notin\OmegaF,
  \qquad
  \frac{12}{25}\in\OmegaF.
\]
The respective numbers of distinct real zeros of $A_\omega$ are $8$, $4$,
and $8$. In addition,
\[
  \left[\frac{11}{25},\infty\right)\subset\OmegaF.
\]
Consequently $\OmegaF$ is neither an up-set nor an interval.
\end{theorem}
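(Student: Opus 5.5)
The structural claims come first and are immediate from \eqref{eq:F}. Each factor $\bigl((z-a)^2+b^2\bigr)$ has real coefficients and zeros $a\pm ib$. The map $z\mapsto -z$ swaps the factors with $a=\pm\tfrac1{50}$ and likewise those with $a=\pm\tfrac35$, so $F$ is real and even. Its zeros are $\pm\tfrac1{50}\pm\tfrac7{25}i$ and $\pm\tfrac35\pm\tfrac{11}{25}i$, all with $|\operatorname{Im}z|\le 11/25$.

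Next I would note two facts about $A_\omega$. It is real because $\overline{F(\bar z+i\omega)}=F(z-i\omega)$. It is even because $F$ is even: $F(-z\pm i\omega)=F(z\mp i\omega)$. The leading term is $z^8$, so $\deg A_\omega=8$. Hence $A_\omega(z)=P_\omega(z^2)$ with $P_\omega$ a real quartic having rational coefficients when $\omega\in\mathbb Q$. I would compute these coefficients explicitly by expanding $F$ in powers of $z$ and applying $\cos(\omega D)=\sum_k(-1)^k\omega^{2k}D^{2k}/(2k)!$. Each value $\omega\in\{6/25,\,3/10,\,12/25\}$ then gives an exact rational polynomial.

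For each of these $\omega$, I would count distinct real zeros of $A_\omega$ by Sturm's theorem. Build the Sturm sequence of $A_\omega$ over $\mathbb Q$ and compare sign changes at $\pm\infty$ via leading coefficients. The expected counts are $8,4,8$. A count of $8$ equals the degree, so all zeros are real and simple, which gives $\omega\in\OmegaF$. A count of $4$ means, with simple real roots, that at least one pair of roots is non-real, so $3/10\notin\OmegaF$.

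Equivalently, and more cheaply, I could apply Sturm to the quartic $P_\omega$ on $(0,\infty)$. I would also check $P_\omega(0)\neq0$, since then each positive root of $P_\omega$ yields two real roots of $A_\omega$ and a negative or non-real root of $P_\omega$ yields non-real ones. This is pure exact rational arithmetic, and the verifier mentioned in the abstract would implement exactly this.

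For the tail, the zeros of $F$ lie in $|\operatorname{Im}z|\le\mu=11/25$, so the cited strip theorem \cite[Theorem 1.6]{BrandenChasse2017} puts the zeros of $\cos(\omega D)F$ in a strip of half-width $\sqrt{\max\{\mu^2-\omega^2,0\}}=0$ for $\omega\ge 11/25$. This gives $[11/25,\infty)\subset\OmegaF$.

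For a self-contained alternative I would use de Bruijn's argument. Write $F(z+i\omega)=c\prod(z+i\omega-\rho_j)$. If $z=x+iy$ with $y>0$ and $\omega\ge\mu$, each factor satisfies $|z+i\omega-\rho_j|>|z-i\omega-\rho_j|$, because $\operatorname{Im}\rho_j\ge-\mu\ge-\omega$ and $y>0$. So $|F(z+i\omega)|>|F(z-i\omega)|$, and the sum cannot vanish; the case $y<0$ is symmetric.

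Finally, $6/25<3/10<12/25$ with the outer two points in $\OmegaF$ and the middle one not, so $\OmegaF$ is not an interval. Since $6/25\in\OmegaF$ but $3/10\notin\OmegaF$, it is not an up-set.

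The main obstacle is the Sturm computation. The coefficients have large denominators, and the sequence must be computed exactly, with no floating point, to certify the counts $8,4,8$. In particular the intermediate value $3/10$ must not sit so close to a root collision that the count is fragile. I would do it on the quartic $P_\omega$, clearing denominators to integers, to keep the certificate short.
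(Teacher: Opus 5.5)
Everything except your self-contained tail argument follows the paper's route: the symmetry and strip assertions, the reduction $A_\omega(z)=P_\omega(z^2)$ with Sturm chains for the quartics on $(0,\infty)$, the appeal to \cite[Theorem 1.6]{BrandenChasse2017} for the tail, and the final in-out-in deduction. The membership part remains a plan until the exact quartics and sign-variation counts are actually produced. One small point there: in the quartic route, $P_{3/10}(0)\neq0$ plus ``two distinct positive roots'' does not yet give $3/10\notin\OmegaF$. You also need those roots to be simple, since e.g.\ $(t-a)^3(t-b)$ with $a,b>0$ would still give a real-rooted $A_\omega$. The paper reads square-freeness off the nonzero constant at the end of each Sturm chain.

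The genuine error is in your self-contained alternative for the tail. The per-factor inequality $|z+i\omega-\rho_j|>|z-i\omega-\rho_j|$ is false in general. With $w=z-\rho_j$,
\[
|w+i\omega|^2-|w-i\omega|^2=4\omega\operatorname{Im}w=4\omega\,(y-\operatorname{Im}\rho_j),
\]
so it holds if and only if $y>\operatorname{Im}\rho_j$. This fails for the zeros $\pm\frac35+\frac{11}{25}i$ whenever $0<y<\frac{11}{25}$. For instance, $z=\frac35+\frac1{100}i$ and $\omega=\frac{11}{25}$ give moduli $\frac1{100}$ and $\frac{87}{100}$. The hypothesis $\omega\geq\mu$ cannot help in a one-factor comparison, because $\omega$ enters only as a positive multiplier.

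The missing idea is to use that $F$ is real and compare each conjugate pair $\beta\pm i\gamma$ together. Put $a=(x-\beta)^2$ and
\[
g(s)=\bigl(a+(s-\gamma)^2\bigr)\bigl(a+(s+\gamma)^2\bigr)=(a+s^2+\gamma^2)^2-4s^2\gamma^2 .
\]
Then
\[
g(y+\omega)-g(y-\omega)=8y\omega\,\bigl(y^2+\omega^2+a-\gamma^2\bigr),
\]
which is positive for $y>0$ because $a\geq0$ and $\omega^2\geq\gamma^2$. Multiplying over the four pairs yields $|F(z+i\omega)|>|F(z-i\omega)|$; this is the paper's elementary proof. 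Since your primary tail argument is the citation, the overall proof survives, but the alternative as written does not establish the tail.
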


\begin{proof}[Proof of the symmetry and strip assertions]
The zeros of $F$ are visible directly from \eqref{eq:F}:
\[
 \frac1{50}\mathbin{\pm}\frac7{25}i,
 -\frac1{50}\mathbin{\pm}\frac7{25}i,
 \frac35\mathbin{\pm}\frac{11}{25}i,
 -\frac35\mathbin{\pm}\frac{11}{25}i.
\]
This proves the symmetry and strip assertions. The two remaining parts of
the theorem are proved separately below.
\end{proof}

\section{Proof idea and verification route}

This section is an explanatory map to the formal certificate below. It does
not add a theorem, replace the exact proof, or make a claim about the complete
shape of $\OmegaF$. Its purpose is to make clear why one finite polynomial,
three rational parameters, and exact Sturm computations suffice for the stated
separation result.

\subsection*{Motivation and mechanism}

The point of the three parameters is their order:
\[
  \frac6{25}=0.24<\frac3{10}=0.30<\frac{12}{25}=0.48.
\]
At the first and third values, $A_\omega$ has all eight zeros real, while
at the middle value it has only four real zeros and hence four nonreal zeros.
An interval containing both $0.24$ and $0.48$ must contain the intermediate
point $0.30$. Likewise, an up-set containing $0.24$ must contain every larger
parameter, including $0.30$. The in-out-in pattern therefore rules out both
an interval and a monotone threshold of good parameters. It is the pattern in
one continuously parametrized family, not the individual root counts in
isolation, that provides the counterexample. In particular, real-rootedness
at one parameter and throughout an eventual tail cannot be joined by a purely
structural monotonicity argument; additional information about the family is
required. This is not presented as a refutation of a named conjecture, but as
a separation result showing that the stated structural assumptions do not
imply monotonicity.

The mechanism is deliberately finite. The polynomial is real and even, with
four conjugate pairs at two rational horizontal locations and two rational
strip heights. Symmetric imaginary translation preserves evenness. Thus the
degree-eight question for $A_\omega$ can be re-expressed as a degree-four
question in $t=z^2$. This reduction does not approximate the roots: it records
the symmetry already present in the displayed polynomial. Positive roots in
$t$ correspond in pairs to real roots in $z$, while a failure to obtain four
positive roots prevents all eight roots of $A_\omega$ from being real.

\subsection*{Role of the assumptions and logical route}

Reality ensures that the shift average has real coefficients, so Sturm's
theorem can count its real roots exactly. Evenness supplies the substitution
$t=z^2$, reducing the certificate to quartics without changing its conclusion.
The visible factor form of $F$ also locates its zeros in a strip of half-width
$11/25$. That strip information is used only for the eventual-tail statement.
It is not a substitute for the three finite certificates, and it does not
imply monotonicity at smaller parameters.

The formal route is directional. First, the factorization establishes the
symmetry and strip assertions. Second, exact expansion gives one quartic
$P_\omega$ at each selected rational parameter. Third, a Sturm chain
determines the number of positive roots of each quartic from endpoint sign
variations. Fourth, the substitution $t=z^2$ converts those counts to the
real-root counts of the degree-eight averages. Finally, the ordered in-out-in
pattern yields the two set-theoretic consequences. The later tail argument is
independent supporting information: it explains why an eventual real-rooted
regime can coexist with the finite nonmonotone pattern.

\subsection*{Sturm certificates and the exact verifier}

The exact certificate is shorter after using evenness. We write
$A_\omega(z)=P_\omega(z^2)$, where $P_\omega$ is a real quartic. Sturm's
theorem counts four, two, and four positive roots of these quartics. Every
positive root $t$ produces the two real roots $z=\mathbin{\pm}\sqrt t$, so
the corresponding degree-eight polynomials have eight, four, and eight real
zeros. The expanded rational coefficients only keep this sign computation
exact; the sign-variation table is the actual root-count certificate. The
separate tail argument shows that all $\omega\geq11/25$ are good, so the
eventual classical regime coexists with nonmonotone behavior below it. The
certificate is exact throughout: coefficients, Euclidean remainders, endpoint
signs, and variation counts are rational computations. The accompanying
standard-library verifier recomputes these objects using exact fractions. It
is a reproducibility check for the displayed certificate, not a replacement
for mathematical reading and not a claim of human peer review.

\subsection*{What this section does not claim}

The argument establishes the displayed membership and non-membership facts,
the corresponding root counts, and the stated eventual tail. It does not
classify every parameter in $\OmegaF$, prove that degree eight is minimal,
identify a first historical example, or decide monotonicity for a special
family with additional structure. No numerical root approximation is used as
the certificate. The AI-assisted origin and model-audit history are disclosed
separately; they are provenance information and are not presented as an
independent human referee report.

\section{Exact Sturm certificate}

\begin{proof}[Proof of the membership claims]
Because $F$ is even, every $A_\omega$ is even. Write
\[
  A_\omega(z)=P_\omega(z^2),
\]
where $P_\omega$ is a real quartic. Exact expansion gives
\begin{align*}
P_{6/25}(t)
={}&t^4-\frac{2237}{1250}t^3+\frac{161441}{250000}t^2
-\frac{5715517}{97656250}t+\frac{173989}{976562500},\\
P_{3/10}(t)
={}&t^4-\frac{3371}{1250}t^3+\frac{1333057}{1250000}t^2
-\frac{267506531}{1953125000}t
+\frac{906531301}{9765625000000},\\
P_{12/25}(t)
={}&t^4-\frac{1657}{250}t^3+\frac{5734597}{1250000}t^2
-\frac{194818069}{244140625}t
+\frac{1243480589}{122070312500}.
\end{align*}

For any one of these quartics, define its Sturm chain by
\[
  S_0=P_\omega,
  \qquad S_1=P_\omega',
  \qquad S_{j+1}=-\operatorname{rem}(S_{j-1},S_j).
\]
Exact Euclidean division over $\mathbb{Q}$ gives degrees
$4,3,2,1,0$ in all three cases. The endpoint signs and variation counts are
as follows. No entry in a displayed sign row is zero.

\begin{center}
\small
\begin{tabular}{@{}c c c c c c@{}}
\toprule
$\omega$ & $\operatorname{sgn}S(0^+)$ & $V(0^+)$
& $\operatorname{sgn}S(+\infty)$ & $V(+\infty)$
& positive roots of $P_\omega$ \\
\midrule
$6/25$  & $+\ -\ +\ -\ +$ & $4$ & $+\ +\ +\ +\ +$ & $0$ & $4$ \\
$3/10$  & $+\ -\ +\ -\ -$ & $3$ & $+\ +\ +\ +\ -$ & $1$ & $2$ \\
$12/25$ & $+\ -\ +\ -\ +$ & $4$ & $+\ +\ +\ +\ +$ & $0$ & $4$ \\
\bottomrule
\end{tabular}
\end{center}

The last member of each chain is a nonzero constant, so each quartic is
square-free. Also $P_\omega(0)>0$. Sturm's theorem therefore gives exactly
four, two, and four distinct positive roots of the three quartics. Under
$t=z^2$, each positive root yields the two distinct real roots
$z=\mathbin{\pm}\sqrt{t}$. Thus the three polynomials $A_\omega$ have
respectively eight, four, and eight distinct real roots. Since each
$A_\omega$ has degree eight, the middle polynomial is not real-rooted. This
proves the three membership claims.
\end{proof}

The exact verifier included with the release reconstructs $F$ from its four
quadratic factors, expands all three vertical averages over $\mathbb{Q}$,
computes both the quartic chains above and the degree-eight Sturm chains, and
checks square-freeness and the root counts. It uses only the Python standard
library.

\section{The classical real-rooted tail}

The containment
\[
  [11/25,\infty)\subset\OmegaF
\]
is a special case of the de Bruijn strip-contraction theorem, in the precise
form of \cite[Theorem 1.6]{BrandenChasse2017}. We include an elementary proof
for this finite polynomial.

\begin{proof}[Proof of the tail]
Fix $\omega\geq11/25$ and let $z=x+iy$ with $y>0$. For one conjugate pair
$\beta\mathbin{\pm}i\gamma$ of zeros of $F$, put
\[
  a=(x-\beta)^2,
  \qquad
  g(s)=\bigl(a+(s-\gamma)^2\bigr)
       \bigl(a+(s+\gamma)^2\bigr).
\]
The quantity $g(s)$ is the squared modulus contributed by this pair at
imaginary coordinate $s$. Set $p=y-\omega$ and $q=y+\omega$. Direct
subtraction gives
\begin{align*}
  g(q)-g(p)
  &=8y\omega\bigl(y^2+\omega^2+a-\gamma^2\bigr)\\
  &>0.
\end{align*}
Indeed, every pair in \eqref{eq:F} satisfies
$|\gamma|\leq11/25\leq\omega$, and $y>0$. Multiplying the strict inequality
over the four conjugate pairs yields
\[
  |F(z+i\omega)|>|F(z-i\omega)|.
\]
The two summands in \eqref{eq:Aomega} therefore cannot cancel in the upper
half-plane. Since $A_\omega$ is a real polynomial, conjugation excludes zeros
in the lower half-plane as well. Hence every zero of $A_\omega$ is real.
\end{proof}

\section{Consequences and scope}

Recall that a subset $U\subset[0,\infty)$ is an up-set if
$\omega\in U$ and $\sigma>\omega$ imply $\sigma\in U$. Here
\[
  \frac6{25}<\frac3{10}<\frac{12}{25}.
\]
Because $6/25\in\OmegaF$ but $3/10\notin\OmegaF$, the set $\OmegaF$ is not
an up-set. Because both endpoints $6/25$ and $12/25$ belong to $\OmegaF$
while the intermediate point $3/10$ does not, $\OmegaF$ is not an interval.

The example refutes only a generic structural implication: real and even
input, conjugation-symmetric strip zeros, and an eventual real-rooted tail do
not force monotonicity of the fixed-input real-rootedness set. The result does
not characterize all of $\OmegaF$, prove a lower bound on the degree of any
such example, or address a distinguished transcendental or arithmetic
family. Additional structure in a special family could still impose a
monotonicity theorem absent from the finite example.

\section{Reproducibility, AI use, and provenance}

The release package contains the LaTeX source, this PDF, the standard-library
exact verifier, a recorded verifier output, an audit summary, a bounded
prior-art summary, and a SHA-256 manifest. Running
\[
  \texttt{python -B tools/verify\_d05\_sturm.py}
\]
reproduces the coefficient maps, Sturm signs, variation counts, real-root
counts, and square-freeness checks.

The explicit witness and candidate proof arose in a GPT-5.5/xhigh headless
quota-failover continuation.
Claude Opus 5 Max later audited and reconciled the source record; this was not
the initial discovery. Fresh GPT-5.5/xhigh sessions were then used for
independent proof reconstruction, repair verification, and post-audit
cross-checking. GPT-5.6-sol/max was used for the focused prior-art
adjudication. The exact arithmetic was also recomputed in fresh sessions and
by the released deterministic script. Model cross-checking does not establish
human peer review or historical priority. The accompanying workflow file
gives the detailed roles and limitations.

Vasily Stodolsky directed the research workflow, set its scope and release
criteria, curated the mathematical and verification record, made the final
decisions on claims and revisions, and approved this version. He accepts
responsibility for all contents of the work. The AI systems are research tools
and are not listed as authors. No independent human peer review is claimed.
Vasily Stodolsky is identified by ORCID 0009-0003-5329-5308.

\medskip
\noindent\textbf{License.}
The manuscript and documentation are licensed under CC BY 4.0. Executable
code in the release is licensed under the MIT License.

\footnotesize
\bibliographystyle{abbrv}
\bibliography{references}

\end{document}